\renewcommand{\phi}{\varphi}
\newcommand{\I}{\mathcal{I}}
\newcommand{\U}{\mathcal{U}}
\newcommand{\abs}[1]{\lvert#1\rvert}
\newcommand{\g}{\gamma}
\newcommand{\e}{\varepsilon}
\renewcommand{\r}{\varrho}
\renewcommand{\rho}{\varrho}
\newcommand{\s}{\sigma}
\newcommand{\n}{\nu}
\renewcommand{\cal}[1]{\mathcal{#1}}
\newcommand{\wt}{\widetilde}
\renewcommand{\d}{\delta}
\newcommand{\Eucl}{\textup{Euc}}
\newcommand{\p}{\partial}
\newcommand{\R}{\mathbb{R}}
\newcommand{\N}{\mathbb{N}}
\newtheoremstyle{pippo}  
  {}       
  {}       
{\slshape} 
 {}        
  {\bfseries}  
  {.}   
  {1ex}       
  {}           
\newtheorem{theorem}{Theorem}[section]
\newtheorem{lemma}[theorem]{Lemma}
\newtheorem{corollary}[theorem]{Corollary}
\newtheorem{example}[theorem]{Example}
\theoremstyle{remark}
\newtheorem{remark}[theorem]{Remark}
\renewcommand{\d}{\delta}
\renewcommand{\t}{\tau}
\renewcommand{\O}{\mathcal{O}}
\renewcommand{\P}{\mathcal{P}}
\renewcommand{\a}{\alpha}
\renewcommand{\b}{\beta}
\newcommand{\loc}{\textup{loc}}
\DeclareMathOperator{\Lip}{Lip}
\DeclareMathOperator{\Span}{span}
\newcommand{\norm}[1]{\left\Vert#1\right\Vert}
\numberwithin{equation}{section}
\let\oldbibliography\thebibliography
\renewcommand{\thebibliography}[1]{\oldbibliography{#1}\setlength{\itemsep}{0pt}}
\begin{document}

 \title{A Frobenius-type theorem  for singular
Lipschitz~distributions\thanks{2010 Mathematics Subject Classification. Primary 53C12;
Secondary 53C17.
Key words and Phrases. Frobenius theorem 
Singular distribution, Orbits of vector fields.}}
\author{Annamaria Montanari and Daniele Morbidelli}

\date{\today}
\maketitle



\begin{abstract}
We prove a Frobenius-type theorem for singular distributions generated by a
family of locally Lipschitz continuous vector fields satisfying almost everywhere a 
quantitative finite type condition.
\end{abstract}

\section{Introduction and main result}
Consider a family  $\P:=\{Y_1, \dots, Y_q\}$ 
of vector fields in $\R^n$.
For all  $x\in\R^n$  let $P_x:= \Span\{Y_{1,x},\dots, Y_{q,x}\},$
where $Y_{j,x}:= Y_j(x)$ denotes the vector field $Y_j$ evaluated at
$x\in\R^n$.

As a consequence of the  classical Frobenius theorem, it is known that if  the vector fields
are smooth, the rank   $p_x:=\dim P_x$ is constant in $\R^n$ and  if the family
satisfies the
\emph{involutivity condition}
\begin{equation}\label{giacozzo} 
 [Y_i, Y_j](x)\in P_x\quad\text{for all $x\in \R^n$
$i,j\in\{1,\dots,q\}$,} 
\end{equation}
then  for all $x\in\R^n$ there is a   smooth immersed submanifold $M^x$ containing $x$ and 
 with $T_y M^x =
P_y$ for all $y\in M^x$; see \cite{Chevalley}. The result still holds if
one removes the constant-rank assumption, but the involutivity assumption
\eqref{giacozzo} does not suffice. Hermann \cite{Hermann} has shown that
a sufficient
condition for smooth distributions is the \emph{finite-type  involutivity condition}
\begin{equation}\label{ft} 
      [Y_i, Y_j] = \sum_{1\le k\le q} c_{ij}^k Y_k \quad\text{for all $i,j\in\{1,\dots,q\}$},
\end{equation}
where the coefficients $c_{ij}^k$ are locally bounded.

In modern terms, a ``maximal'' choice the integral manifold $M^x$ can be identified as the following 
\emph{orbit} or \emph{leaf}:   
 \begin{equation}\label{orbs} \begin{aligned}
 \cal{O}^{x}:= \{y: d(x,y) <\infty\},         
\end{aligned}\end{equation}
where $d$ denotes the Carnot--Carath\'eodory distance
\begin{equation}\label{dosto} 
\begin{aligned}
      d(x,y):= \inf\Bigl\{T\geq0 : 
\text{ $\exists \; \gamma$ subunit on $[0,T]$ with $\gamma(0)=x$, $\gamma(T)=y$\big\}.}  
\end{aligned}
\end{equation}
Recall that a  path $\gamma:[0,T]\to \R^n$ is subunit if it is  Lipschitz and satisfy 
for a.e.~$t$ the ODE $\dot\gamma(t)= \sum_{j=1}^q u_j(t) Y_j(\gamma(t))$ with $u\in L^\infty
((0,T),\R^q)$ and $\abs{u(t)}\le 1$ for a.e.~$t$. We agree that $d(x,y)=\infty$ if there are no subunit paths
connecting $x$ and~$y$.

Generalizations of the Frobenius theorem hold for nonsmooth vector
fields. Hartman \cite{Hartman}, Simic
\cite{Simic96} and Rampazzo
\cite{Rampazzo}  have shown that,  given a
constant-rank family of locally Lipschitz vector fields which satisfies
\eqref{giacozzo} almost everywhere  or as a set-valued commutator
\cite{RamSus}, then 
integral manifolds  are  $C^{1,1}$ smooth.

In applications it is sometimes necessary a Frobenius theorem for singular
distributions, in which the dimension of the 
subspace $P_x$
may vary with $x.$
This happens for example when one considers the singular distribution associated with the Hamiltonian vector fields  
on a Poisson manifold.  For a good account on this
application we refer to \cite [Section 3.4]{Adler} and 
\cite[Appendix 3]{Libermann}.

In
this note, under an assumption similar to \eqref{ft},  we show 
that a  regularity result for orbits of Lipschitz vector fields
holds in the nonconstant rank case. Here is our main result. Below $\t_d$ denotes the topology associated with the control distance $d$ defined in \eqref{dosto}. Let also for $x\in\R^n$ and $r>0$,
$P_x^r: =
\sum_{j=1}^q c_j Y_{j,x}: \abs{c}\le r\}$.

\begin{theorem}\label{manno} 
 Let
 $\P:=\{Y_1,\dots, Y_q\} $ be a family of locally Lipschitz continuous vector fields in
$\R^n$. Write $Y_j=: g_j\cdot\nabla$ and assume that for any bounded
open
set
$\Omega\subset\R^n$ there is $C= C_{\Omega} >0$ such that
\begin{equation}\label{tostapane}
[Y_{j}, Y_k]_x :=  (Y_j g_k (x)- Y_k g_j(x))\cdot\nabla
\in P_x^{C_\Omega}\quad \text{for   a.e. $x\in\Omega$}.
\end{equation}
Then for all $x\in \R^n$ the orbit $\O^x$ with topology $\t_d$ has a structure of  connected 
  $C^{1,1}$
immersed submanifold of $\R^n$ with $T_y \O^x = P_y$ for all $y\in
\O^x$. 
\end{theorem}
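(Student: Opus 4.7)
The plan is to construct, around each $y_0 \in \O^x$, a local chart by composing flows of a subfamily of $\P$, show that it is a $C^{1,1}$ homeomorphism onto a $\t_d$-neighborhood of $y_0$ in $\O^x$, and identify $P_y$ with $T_y\O^x$ at each such $y$. Fix $y_0 \in \O^x$, set $p := \dim P_{y_0}$, and (after relabeling) assume $Y_{1,y_0},\ldots,Y_{p,y_0}$ form a basis of $P_{y_0}$. The Cauchy--Lipschitz theorem produces bi-Lipschitz local flows $\exp(tY_j)$, so one defines the candidate chart
\begin{equation*}
\Phi(t_1,\ldots,t_p) := \exp(t_p Y_p)\circ\cdots\circ\exp(t_1 Y_1)(y_0).
\end{equation*}
A quantitative Lipschitz inverse function argument (using that $D\Phi(0) = [Y_{1,y_0}|\cdots|Y_{p,y_0}]$ has full column rank $p$) shows that $\Phi$ is a bi-Lipschitz homeomorphism from a neighborhood $U$ of $0\in\R^p$ onto its image, which is contained in $\O^x$ by construction.

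Next, one verifies that $\Phi(U)$ is $\t_d$-open in $\O^x$ and that the distribution has constant rank along the orbit. For this I would employ a commutator approximation for Lipschitz flows combined with the hypothesis (\ref{tostapane}): the displacement produced by $\exp(sY_k)\exp(sY_j)\exp(-sY_k)\exp(-sY_j)$ is, at order $s^2$ and in an integrated sense, governed by $[Y_j,Y_k]\in P_\cdot^{C_\Omega}$. This gives invariance of $y\mapsto P_y$ under the flows of each $Y_j$ and, combined with upper semicontinuity of $y\mapsto\dim P_y$, forces $\dim P_y = p$ for every $y\in\O^x$. The same commutator expansion shows that every subunit path starting in $\Phi(U)$ is approximable by finite concatenations of flows of the $Y_j$'s and remains in the orbit, hence $\Phi(U)$ is $\t_d$-open.

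The central step is to upgrade the a priori Lipschitz regularity of $\Phi$ to $C^{1,1}$. I would write the partial derivative $\p_{t_j}\Phi(t)$ as the pushforward of $Y_j$ at the appropriate intermediate point by the differentials of the subsequent flows, and then, exploiting constant rank on $\O^x$ together with (\ref{tostapane}), express it as
\begin{equation*}
\p_{t_j}\Phi(t) = \sum_{k=1}^p a_{jk}(t)\, Y_{k,\Phi(t)},
\end{equation*}
where the scalar coefficients $a_{jk}$ satisfy an ODE whose right-hand side is dictated by the brackets $[Y_i,Y_j]$ and is therefore uniformly bounded by $C_\Omega$. Gronwall yields Lipschitz coefficients $a_{jk}(t)$, and since each $Y_k$ is Lipschitz in space, the identity above shows that $D\Phi$ itself is Lipschitz, i.e.\ $\Phi\in C^{1,1}$. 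The same formula identifies $T_{\Phi(t)}\O^x$ with $P_{\Phi(t)}$. Two overlapping charts are compared by composing with a bi-Lipschitz flow, and repeating the same coefficient analysis produces $C^{1,1}$ transition maps.

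The principal obstacle is precisely this last $C^{1,1}$ step: since $\dim P_\cdot$ may drop outside $\O^x$, one cannot complete to a constant full basis of $\R^n$ by independent vector fields, so every commutator manipulation has to be carried out intrinsically within the varying span $P_\cdot$. The uniform quantitative bound (\ref{tostapane}) is exactly what makes this intrinsic coefficient analysis work and is the reason the a.e.\ finite-type condition yields a sharp $C^{1,1}$ regularity statement in the singular-rank setting.
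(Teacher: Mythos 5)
Your outline reproduces the classical smooth-case scheme (Hermann's), but at the two places where the Lipschitz/singular setting actually bites, the steps you invoke do not go through. First, you repeatedly use the hypothesis \eqref{tostapane} \emph{along} trajectories: in the claim that the displacement of $\exp(sY_k)\exp(sY_j)\exp(-sY_k)\exp(-sY_j)$ is ``governed by $[Y_j,Y_k]\in P^{C_\Omega}_\cdot$'', and in the ODE you propose for the coefficients $a_{jk}$ whose right-hand side is ``dictated by the brackets''. But \eqref{tostapane} is only an almost-everywhere condition, and a single trajectory is a Lebesgue-null set, so no pointwise information about $[Y_j,Y_k]$ is available where you need it; moreover the flows of Lipschitz fields are merely Lipschitz, so writing $\p_{t_j}\Phi$ as a pushforward by differentials of the subsequent flows, and then differentiating to get a variational equation for $a_{jk}$, is exactly the rectification/ODE argument that fails below $C^1$ regularity of the fields. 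The paper's proof exists precisely to circumvent this: it mollifies the $Y_j$, transfers \eqref{tostapane} to the mollified fields via a Friedrichs-type commutator estimate (Lemma \ref{lomi} and \eqref{ingrao}), proves constancy of the rank along orbits through a Gronwall inequality for the wedge-product vector $\Lambda_p$ along subunit paths (Theorem \ref{toros}), and then builds an adapted frame $V_j=\sum_k\b_j^kY_{i_k}$ whose flows are shown to commute, \eqref{commu}, again only after a delicate $\s\to0$ analysis valid in a $d$-neighborhood (not a Euclidean one). Commutativity is what yields $\p_{u_k}\Phi=V_{k,\Phi(u)}$ and hence $C^{1,1}$ regularity without ever differentiating a Lipschitz flow; your proposal has no substitute for this mechanism.

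Second, your argument that $\Phi(U)$ is $\t_d$-open rests on the assertion that every subunit path starting in $\Phi(U)$ ``is approximable by finite concatenations of flows of the $Y_j$'s''. That is essentially the statement that the control-distance orbit coincides (locally) with the Sussmann orbit, which the paper explicitly lists as unknown for Lipschitz vector fields (Remark (a)); you cannot assume it. The paper instead proves the inclusion \eqref{sottobosco} by straightening the leaf $\Sigma$ with a $C^{1,1}$ chart and using uniqueness for the ODE satisfied by the pushed-forward (Lipschitz, tangent) fields, so that subunit paths issuing from $\Sigma$ cannot leave it. Two further, smaller inaccuracies: $y\mapsto\dim P_y$ is lower semicontinuous, not upper semicontinuous, so the semicontinuity argument as stated points the wrong way; and a Lipschitz map differentiable at a single point with injective differential need not be locally injective, so your ``quantitative Lipschitz inverse function argument'' is not available before the $C^1$ regularity of $\Phi$ has been established — in the paper injectivity and the submanifold property come \emph{after} $\Phi\in C^{1,1}$ is obtained from commutativity.
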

\begin{remark}
 Concerning the statement, note that
assumption \eqref{tostapane} is meaningful in view of the
Rademacher theorem. Moreover note the following facts:
\begin{itemize*}

\item [(a)] Our orbits are defined in a  different way from the usual Sussmann's orbits. Indeed, the Sussmann's orbit associated with $\P$ is the set of points in $\R^n$ which are reachable from $x$ via a piecewise integral curve of vector fields in $\P$. It is trivially
 $\O^x_{\textup{Sussmann}}\subset\O^x$.
 It is known that  $\O^x_{\textup{Sussmann}}=\O^x$ in more regular situations.
However, it is not known to the authors  whether  these sets agree if the vector fields are  only Lipschitz continuous.

\item [(b)] If one removes the quantitative assumption  \eqref{tostapane}, the statement of  Theorem
\ref{manno} fails. 
This can be seen in the well known example $\P = \{\p_1,
\exp(-1/x_1^2)\p_2\}$,
where $\O^x = \R^2$ for all choices of $x$. Therefore, equality
$T_y \O^x = P_y$ fails at any $y$ in the $x_2$-axis.  Note that 
in such example \eqref{giacozzo} holds, but 
\eqref{tostapane} is not satisfied. 

\item [(c)] One could ask whether or not 
any orbit $\O$  of a finite family $\P$ of locally Lipschitz vector
fields is a submanifold (without requiring that $T_y \O^x = P_y$ for
all $y\in\O$).
 In the smooth case this is true by Sussmann's theorem \cite{Sussmann}.
We do not know
whether a version of Sussmann's theorem for Lipschitz
vector fields holds.

\item [(d)] In the constant rank case, orbits are $C^{1,1}$ leaves  
of  a foliation, \cite{Rampazzo,HillTaylor}. \footnote{The paper \cite{HillTaylor} contains some interesting applications of the  Frobenius theorem to foliation properties for rough Levi flat CR manifolds.} In our 
  case this is in general  not possible, because the dimension of different orbits
can be different. See Example \ref{exam}.

\item[(e)] Given a family $\P = \{ Y_1, \dots, Y_q\}$ of vector fields, 
condition \eqref{tostapane} is not necessary to have the conclusions of Theorem \ref{manno}. See Example
\ref{balan} below.

\item[(f)] 
In the constant rank case, Rampazzo \cite{Rampazzo} 
proves the following statement.
 Let $P= \cup_{x\in \R^n} P_x$ be a Lipschitz distribution.\footnote{I.e.~(compare \cite{Sussmann08}), 
$P_x$ is a subspace of $T_x\R^n$
for each $x$ and moreover for any $x\in \R^n$ and for each $v\in P_x$ there is a neighborhood 
$U$ of $x$ in $\R^n$ and $X\in \Lip(U,\R^n)$ such that $X(x) = v$ and $X(y)\in P_y$ for all $y\in U$.}
Let $\Omega$ be an open set and let $X,Y\in\Lip(\Omega, \R^n)$ be vector fields tangent to $P$ at any point 
of $\Omega$. Then, for almost all $x\in \Omega$ we have $[X, Y](x)\in P_x$. 
We do not have a proof of such statement for singular distributions. 
\end{itemize*}

\end{remark}

\begin{example}\label{exam} 
      Let $F(x_1):= x_1\abs{x_1}$ and $f(x_1):= F'(x_1)= 2\abs{x_1}$.
Define $Y_1:= \p_1+f(x_1)\p_2$ and $Y_2:=\abs{x_2-F(x_1)}\p_2$.
The vector fields $Y_1,Y_2$ are Lipschitz and satisfy \eqref{tostapane} because
$[Y_1, Y_2]=0$ a.e. We have
$\O^{(0,0)}=\{x: x_2= F(x_1)\}$ is a one dimensional $C^{1,1}$
graph. For any $\wt x\in \R^2$ with $\wt x_2>F(\wt x_1)$ we have
$\O^{\wt x}= \{x: x_2>F(x_1)\}$. Finally, if $\wt x_2<F(\wt x_1)$, we have
$\O^{\wt x} = \{x:x_2<F(x_1)\}$.\end{example}

The following example shows that, 
already in the setting of smooth vector fields, the finite type involutivity condition \eqref{tostapane} is not a property of the distribution, but it concerns the spanning family.

\begin{example}\label{balan} \footnote{See the unpublished version of \cite{Balan} available at 
\url{http://www2.math.umd.edu/~rvbalan/PAPERS/MyPapers/distrib.pdf}.}
      Let $x= (x_1, x_2)\in \R^2$. Let $\P= \{Y_1, Y_2\}$ where 
\[
Y_1= e^{-1/\abs{x}^2} \p_1\quad\text{and}\quad Y_2= \abs{x}^2\p_2 .
\]
We have $\O^0= \{0\}$ and $\O^x = \R^2\setminus\{0\}$ for all $x\in \R^2\setminus\{0\}$. Note that $T_y(\R^2\setminus\{0\})
= P_y$ for all $y\in \R^2\setminus\{0\}$. On the other side, observe that we can write uniquely 
for all $x\neq 0$, 
\[
[Y_1, Y_2] = \frac{2x_2}{\abs{x}^2}Y_1 
+ \frac{2x_1 e^{-1/\abs{x}^2}}{\abs{x}^2}Y_2.
\]
Since the function $x\mapsto  \frac{2x_2}{\abs{x}^2}$ is unbounded in any neighborhood of the origin, we conclude that 
\eqref{tostapane} does not hold. Observe finally that this phenomenon does not occur if we change the family with the 
(analytic) family  $\{ \abs{x}^2\p_1, \abs{x}^2\p_2\}$.
\end{example}

Before closing this introduction we briefly describe the proof. 
As already appeared in the paper \cite{Hermann}, the main step is to show that 
the rank  $p_y$ is constant as $y$ belongs to a fixed orbit $ 
\O^x$. This  can be done in the smooth 
case by rectifying one of the vector fields and by an ODE argument (see \cite{Hermann}).
This procedure is not available under our low
 regularity
 assumptions. However we are able to show constancy of the rank along orbits by an approximation argument 
which involves Euclidean mollification of the original vector fields and differentiation of suitable wedge products 
along a given flow. 
This argument is both  a curvilinear and  nonsmooth version of the original one in \cite{Hermann} and relies on some differential formulas
  first derived  in \cite{NagelSteinWainger} and improved in \cite{Street} and \cite{MontanariMorbidelli11d}.
 This is achieved in subsection \ref{dutt}.

After establishing  the constancy of the rank along a given orbit $\O$, 
we will  need to construct local $C^{1,1}$ coordinates on $\O$. 
Here the classical  idea is to construct 
new vector fields which span the same distribution, but whose flows commute (see \cite{Hermann77}).\footnote{It seems that this commutativity argument  appears in the
original Clebsch's proof, which is prior to Frobenius' one, see the historical
paper \cite[Theorem 5.3]{Hawkins}.}
This can be done in the constant rank case in a  full Euclidean neighborhood of any fixed point. 
In the singular case, this construction can be done only in a $d$-neighborhood of a given point
 and it is not clear whether or not it can be extended in any Euclidean neighborhood. Therefore, 
since we are working with 
locally Lipschitz vector fields, 
the  commutativity  argument of \cite[Theorem 5.3]{RamSus}
does not work. We need again to work with the smooth approximations of the vector fields. 
This involves a careful  analysis of how the integrability condition \eqref{tostapane} behaves under mollifications; 
see subsection~\ref{datt}.

\section{Proof of the main result} \label{frobo}
\paragraph{Notation.} Our notation here are similar to
\cite{MontanariMorbidelli11d,MontanariMorbidelli11c}.
 Let $\P:=\{Y_1,\dots, Y_q\}$ be a family of
locally
Lipschitz continuous vector fields. Write $Y_j = : g_j\cdot\nabla$ and 
define for any $p,\mu\in \N$, with $1\le p\le \mu$,
\[
\I(p,\mu)  := \{I=(i_1, \dots, i_p): 1\le i_1<i_2< \cdots<i_p\le \mu\} .      
\]
  Define
 $ p_x:= \dim  \Span   \{ Y_{j,x} : 1\le j\le q\}.$
Obviously, $p_x\le \min\{n,q\}$. Then for any $p\in
\{1,\dots,\min\{n,q\}\}$,
let
\begin{equation*}
 Y_I(x):=Y_{I,x} : = Y_{i_1,x}\wedge\cdots\wedge Y_{i_p,x}\in
{\textstyle\bigwedge}_p
T_x\R^n\sim {\textstyle\bigwedge}_p\R^n \quad\text{for all $I\in
\I(p,q)$}
\end{equation*}
and, for all $K\in \I(p,n)$ and $I\in
\I(p,q)$
\begin{equation*}
Y_I^K(x): = dx^K(Y_{i_1}, \dots, Y_{i_p}) (x)
 : = \det
\left[\begin{smallmatrix}
        g_{i_1}^{k_1}(x) &\cdots &g_{i_p}^{k_1}(x)
  \\  \vdots& \vdots&\vdots
\\ g_{i_1}^{k_p}(x) &\cdots &g_{i_p}^{k_p}(x)
     \end{smallmatrix}\right]
.
\end{equation*}
Here we let $dx^K:=dx^{k_1}\wedge \cdots \wedge d x^{k_p}$  for any
$K=(k_1,\dots, k_p)\in
\I(p,n)$.

Let $e_1, \dots, e_n$ be the canonical basis of $\R^n$. The family
$e_K:= e_{k_1}\wedge\cdots\wedge e_{k_p}$, where 
$K\in\I(p,n)$,
gives an orthonormal basis of $\bigwedge_p\R^n$.
 Then we  have the orthogonal
decomposition
$
 Y_I(x)  =\sum_{K}Y_J^K(x) e_K\in {\bigwedge}_p \R^n
$.

Consider
the linear system $\sum_{k=1}^p\xi^k
Y_{i_k}= W$ for some
$W\in\Span\{Y_{i_1}, \dots, Y_{i_p}\}$. If   $\abs{Y_I}\neq 0$, then the Cramer's
rule  gives the unique solution
\begin{equation}
\label{cromo} 
\xi^k = \frac{\langle Y_I, \iota^k(W)
Y_I\rangle}{\abs{Y_I}^2}\quad\text{for each $k=1,\dots, p$,}
\end{equation}
where we let $
\iota^k(W)
Y_I:= Y_{(i_1,\dots, i_{k-1})}\wedge
 W\wedge Y_{(i_{k+1},\dots,i_p)}$.

Finally, for $p\in\{1,\dots,\min\{n,q\}\}$, introduce the
vector-valued function
\begin{equation*}
\begin{aligned}
\Lambda_p(x)& := \bigl(  Y_J^K (x)
\bigr)_{{J\in \I(p, q)},
K\in
\I(p, n)}.
\end{aligned}
\end{equation*}
Note that $\abs{\Lambda_p(x)}^2 =
\sum_{I\in \I(p, q)}
\abs{Y_I(x)}^2= \sum_{I\in \I(p, q),K\in\I(p,n)}
Y_I^K(x)^2$.

\subsection{Invariance of the dimension \texorpdfstring{$p_x$}{px} on the orbit} \label{dutt} 
Write $Y_j =
:g_j\cdot\nabla$.
For each $x\in\R^n$, let $P_x:= \Span\{Y_{1,x},\dots, Y_{q,x}\}$ and 
 for all $r>0$ 
 let $ P_x^r:=
\bigl\{\sum_{1\le j\le q} c_j Y_{j,x}: \abs{c}\le r\bigr\}$. Finally
define
$p_x:=\dim\Span\{Y_{j,x}\}$.
Let $\O^x$
be the orbit of the family $\cal{P}$
containing $x$, see \eqref{orbs}. Equip $\cal{O}$ with the topology~$\t_d$.
\begin{theorem}
\label{testo} Let
 $Y_j\in \Lip_{\loc}(\R^n)$ and assume that for any bounded open
set
$\Omega\subset\R^n$ there is $C= C_{\Omega} >0$ such that
\begin{equation}\label{tostapanino}
[Y_{j}, Y_k]_x :=  (Y_j g_k(x) - Y_k g_j(x))\cdot\nabla
\in P_x^{C_\Omega}\quad \text{for   a.e. $x\in\Omega$}.
\end{equation}
Then, for all $x_0\in\R^n$, we have
$p_x= p_{x_0} 
$ for all $x\in \O_\P^{x_0}$.
\end{theorem}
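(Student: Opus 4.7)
The plan is to fix $x_0\in\R^n$, set $p:=p_{x_0}$, and reduce the theorem to the claim that $p_{\gamma(T)}\ge p_{\gamma(0)}$ for every subunit path $\gamma:[0,T]\to\R^n$. Since the reversed path $t\mapsto\gamma(T-t)$ is again subunit, the opposite inequality follows by applying the same bound to it; running this on any subunit path joining $x_0$ to an arbitrary $y\in\O_\P^{x_0}$ then gives $p_y=p_{x_0}$.

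The quantitative target is a Gronwall-type estimate $\abs{\Lambda_p(\gamma(T))}\ge e^{-CT}\abs{\Lambda_p(\gamma(0))}$. In the smooth finite-type involutive case this is the classical Hermann argument: setting $X:=\sum_j u_j Y_j$ and using the identity $(DY_{i_k})X=[X,Y_{i_k}]+(DX)Y_{i_k}$, one computes
\begin{equation*}
\frac{d}{dt}Y_I(\gamma(t)) = (DX)^{\wedge p}Y_I + \sum_j u_j\sum_k Y_{i_1}\wedge\cdots\wedge[Y_j,Y_{i_k}]\wedge\cdots\wedge Y_{i_p}.
\end{equation*}
Expanding each commutator as $[Y_j,Y_k]=\sum_l c_{jk}^l Y_l$ with $\abs{c}\le C$ and simplifying by antisymmetry turns the last sum into a linear combination of the family $\{Y_J\}_{J\in\I(p,q)}$; combined with the elementary $\abs{(DX)^{\wedge p}Y_I}\le C'\abs{Y_I}$, this yields $\tfrac{d}{dt}\abs{\Lambda_p(\gamma(t))}^2 \ge -C''\abs{\Lambda_p(\gamma(t))}^2$, and Gronwall closes the smooth case.

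To adapt this under the Lipschitz assumption I would mollify: set $Y_j^\e:=Y_j*\rho_\e$, let $\gamma^\e$ solve $\dot\gamma^\e=\sum_j u_j Y_j^\e(\gamma^\e)$ with $\gamma^\e(0)=\gamma(0)$, and note that $Y_j^\e\to Y_j$ uniformly on compacts and $\gamma^\e\to\gamma$ uniformly on $[0,T]$. The hard part is to extract, from the a.e.\ inclusion \eqref{tostapanino}, a usable approximate involutivity for the smoothed family. Choosing a bounded measurable selection $c_{jk}^l$ with $\abs{c_{jk}^l}\le C_\Omega$ and $[Y_j,Y_k]=\sum_l c_{jk}^l Y_l$ a.e.\ on $\Omega$, I would combine the standard commutator-with-convolution estimate (using the Lipschitz regularity of the $g_j$) with the Lipschitz regularity of the $Y_l$ themselves to obtain, uniformly on compact subsets of $\Omega$,
\begin{equation*}
[Y_j^\e,Y_k^\e](x) = \sum_l c_{jk}^{l,\e}(x)\,Y_l^\e(x) + r^\e(x),\qquad \abs{c_{jk}^{l,\e}}\le C_\Omega,\quad \abs{r^\e(x)}\le C\omega(\e),
\end{equation*}
with $\omega(\e)\to 0$. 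This quasi-involutivity for $\{Y_j^\e\}$ is the curvilinear and nonsmooth analogue of the differential identities of \cite{NagelSteinWainger,Street,MontanariMorbidelli11d} quoted in the introduction, and its derivation is the main technical obstacle.

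Inserting this approximate involutivity into the smooth ODE computation for $\Lambda_p^\e(\gamma^\e(t))$ produces
\begin{equation*}
\frac{d}{dt}\abs{\Lambda_p^\e(\gamma^\e(t))}^2 \ge -\tilde C\bigl(\abs{\Lambda_p^\e(\gamma^\e(t))}^2+\omega(\e)\bigr),
\end{equation*}
with $\tilde C$ depending only on the Lipschitz norm of $\P$ and on $C_\Omega$ on a compact neighborhood of $\gamma([0,T])$. Gronwall applied to $t\mapsto \abs{\Lambda_p^\e(\gamma^\e(t))}^2+\omega(\e)$, together with the uniform convergences $Y_I^\e(\gamma^\e(\cdot))\to Y_I(\gamma(\cdot))$, then yields in the limit $\e\to 0$ the desired $\abs{\Lambda_p(\gamma(T))}^2\ge e^{-\tilde C T}\abs{\Lambda_p(\gamma(0))}^2$, hence $p_{\gamma(T)}\ge p_{\gamma(0)}$ and the theorem.
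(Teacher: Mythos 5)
Your proposal is correct and follows essentially the same route as the paper: a bounded measurable selection of the coefficients (the paper's Lemma \ref{lomi}, via the Moore--Penrose inverse), a Friedrichs-type quasi-involutivity identity $[Y_j^{(\s)},Y_k^{(\s)}]=\sum_l (c_{jk}^l)^{(\s)}Y_l^\s+O(\s)$ for the mollified fields (the paper's \eqref{ingrao}), differentiation of the full vector of $p$-minors $\Lambda_p$ along the mollified flow, Gronwall, and passage to the limit $\s\to0$. The only differences are presentational: the paper works componentwise through Lemma \ref{carica} and states the conclusion as the deviation bound \eqref{stoic2} followed by an open--closed argument along the path, whereas you use the equivalent lower bound $\abs{\Lambda_p(\gamma(T))}\ge e^{-CT}\abs{\Lambda_p(\gamma(0))}$ together with time reversal of the subunit path.
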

Concerning formula \eqref{tostapanino}, note that $Y_j g_k(x)= g_j(x)\cdot\nabla g_k(x) 
$ exists for almost all
$x\in \R^n$
by the Rademacher theorem.

In order to prove Theorem \ref{testo} we start with  a measurability lemma.
\begin{lemma}\label{lomi}  Assume that \eqref{tostapane} holds. Then 
  there are measurable functions $c_{jk}^i:\R^n\to \R$ such
that 
\begin{equation}\label{mistola}
 [Y_j, Y_k]_x =\sum_{1\le i\le q} c_{jk}^i (x) Y_{i,x}  \quad\text{for
almost all
$x\in \R^n$ and all $j,k= 1,\dots, q$,}
\end{equation}
where $\|c_{jk} \|_{L^\infty(\Omega)}\le C_{\Omega}$  for
all
$j,k\in\{1,\dots,q\}$.
\end{lemma}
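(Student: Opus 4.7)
I would define $c_{jk}(x) = (c_{jk}^1(x), \ldots, c_{jk}^q(x))$ pointwise as the minimum-Euclidean-norm solution of the linear system $\sum_i c_i Y_{i,x} = [Y_j, Y_k]_x$, and then verify measurability and the $L^\infty$ bound directly from this definition.

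First, for each $x$ where $[Y_j, Y_k]_x$ exists (a full-measure set by Rademacher), set
\[
F_{jk}(x) := \bigl\{c \in \R^q : \textstyle\sum_{i=1}^q c_i Y_{i,x} = [Y_j,Y_k]_x\bigr\},
\]
a (possibly empty) affine subspace of $\R^q$. Condition \eqref{tostapane} says exactly that for a.e.\ $x$ in any bounded open $\Omega$ the set $F_{jk}(x)$ meets the closed ball $\overline{B(0,C_\Omega)}\subset\R^q$. I would then let $c_{jk}(x)$ be the unique element of $F_{jk}(x)$ of least Euclidean norm, and set $c_{jk}(x)=0$ on the null set where $F_{jk}(x)$ is empty. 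By minimality $\abs{c_{jk}(x)} \le C_\Omega$ for a.e.\ $x \in \Omega$, which gives both \eqref{mistola} and the required $L^\infty$ bound.

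The only remaining task is measurability. Writing $L(x) := [\,g_1(x)\mid\cdots\mid g_q(x)\,]\in \R^{n\times q}$ and $b(x) := [Y_j,Y_k]_x$, one has $c_{jk}(x) = L(x)^+ b(x)$, where $L^+$ denotes the Moore--Penrose pseudoinverse. I would stratify $\R^n$ by the rank of $L(x)$ into the measurable sets $E_r := \{x : \mathrm{rank}\, L(x) = r\}$, $r = 0, 1, \ldots, \min(n,q)$ (each defined by vanishing and non-vanishing of finitely many continuous minors of $L$). On each $E_r$ the pseudoinverse depends continuously on the entries of $L$, so $c_{jk}$ is measurable on each stratum, and hence measurable on all of $\R^n$. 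An equivalent route is to apply the Kuratowski--Ryll-Nardzewski measurable selection theorem to the closed-convex-valued multifunction $x \mapsto F_{jk}(x) \cap \overline{B(0,C_\Omega)}$, whose graph is Borel by measurability of the data.

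\textbf{Main obstacle.} The delicate point is precisely producing a measurable selection whose norm is globally controlled. Choosing instead a maximal linearly independent subfamily of $\{Y_{i,x}\}$ depending on $x$ and invoking Cramer's rule \eqref{cromo} would give a measurable representation, but the resulting coefficients can blow up near points where the chosen minors $Y_I$ degenerate; the minimum-norm / pseudoinverse selection bypasses this, since its norm is automatically majorized by that of any admissible representative, which by hypothesis is at most $C_\Omega$.
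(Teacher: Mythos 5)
Your proposal is correct and follows essentially the same route as the paper: both take $c_{jk}(x)$ to be the least-norm solution of $\sum_i c_i Y_{i,x}=[Y_j,Y_k]_x$, i.e.\ the Moore--Penrose pseudoinverse applied to the commutator, so that the bound $\abs{c_{jk}(x)}\le C_\Omega$ follows from minimality against the admissible representative guaranteed by \eqref{tostapane}. The only difference is in how measurability of the pseudoinverse selection is checked: you stratify by the rank of the matrix $[Y_{1,x},\dots,Y_{q,x}]$ (or invoke a measurable selection theorem), while the paper uses the approximation formula $Y_x^\dag v=\lim_{\delta\to 0}(\delta I+Y_x^TY_x)^{-1}Y_x^T v$; both are standard and equally valid.
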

\begin{proof} For all $x\in \R^n$ let $Y_x:= [Y_{1, x},\dots,
Y_{q,x}]\in \R^{n\times q}$.  
Let $Y_{x}^\dag$ be
its Moore--Penrose inverse.
Therefore, at any differentiability point $x$ of both $g_j$ and $g_k$,
the vector 
$c_{jk}(x):=Y_x^\dag (Y_jg_k(x)- Y_kg_j(x))\in \R^q$ is the
least-norm solution of the system $Y_x
\xi = Y_jg_k(x) - Y_k g_j(x)$, with $\xi\in \R^q$.
Therefore  $\abs{c_{jk}(x)}\le C_\Omega$ for a.e.~$x\in \Omega$.
Measurability
follows from the approximation formula 
\begin{equation*}
Y_x^\dag (Y_jg_k - Y_k g_j)(x)= \lim_{\delta\to 0}(\delta I
+ Y^T_x
Y_x )^{-1} Y_x^T
(Y_jg_k - Y_k g_j)(x). 
\end{equation*}
(see the appendix of \cite{MontanariMorbidelli11a}).  
The proof is concluded.
\end{proof}

Recall now some properties of mollifiers from \cite{RamSus}.
For any    $f\in L^1_\loc(\R^n)$, let  $f^{(\s)}(x) :=   \int_{\R^n}
f(x-\s y) \chi(y) dy$, where $\chi\ge 0$ is a smooth averaging kernel
supported in the unit ball. Let also $Y^{(\s)}_j : = g_j^{(\s)}\cdot
\nabla$
be the smooth approximation of the vector field $Y_j$. Denote for all
$p\in
\{1,\dots,q\}$ and for all  $J\in\I(p, q)$,
$Y_J^\s:=
Y_{j_1}^\s\wedge\cdots\wedge Y_{j_p}^\s$. Note that    in general
$Y_J^\s\neq
Y_J^{(\s)}$, unless $J\in\I(1,q)$. 

Let now
$\Omega_0\subset\subset\Omega_1 $ be bounded open sets contained in
$\R^n$.
Then, since the vector fields are locally Lipschitz,  there are
$\wt\s=\wt\s(\Omega_0,\Omega_1)$ and $C>0$ such that  $\sup_{\Omega_0}
\abs{Y_J^\s\to Y_J}\le C\sigma$, for all $\sigma<\wt\s$ for some $C$
depending on $\| Y_j\|_{C^{0,1}(\Omega_1)}$.

Recall  the following Friedrichs-type commutator estimate (see \cite[Lemma~4.5]{RamSus}).
For all 
$\s\le \wt\s $, 
we can write on $\Omega_0$
\[
 [Y_j^{(\s)}, Y_k^{(\s)}] = [Y_j, Y_k]^{(\s)} + \s b_{jk}^\s\cdot\nabla,
\]
where the smooth functions $b_{jk}^\s$ satisfy $\sup_{\Omega_0}
\abs{b_{jk}^\s}\le
C$ for some
$C$ depending on $\norm{Y_j}_{{C^{0,1}(\Omega_1)}}$, 
$\norm{Y_k}_{{C^{0,1}(\Omega_1)}}$ and for all
$\s\le \wt \sigma$. Then, using
\eqref{mistola}, we get, for possibly
different functions $b^\s_{jk}$, also depending on the constant
$C_{\Omega_1}$ in the assumptions of Theorem~\ref{testo}
\begin{equation}\label{ingrao}
 [Y_j^{(\s)}, Y_k^{(\s)}] = \sum_{1\le i\le q} (c_{jk}^i)^{(\s)} Y_i^\s
+ \s
b_{jk}^\s\cdot\nabla.
\end{equation}
Here $c_{jk}^i$ are the measurable functions appearing in Lemma
\ref{lomi} and the functions $b_{jk}^\s$ are smooth and uniformly
bounded as $\s\to 0$.

\begin{theorem}\label{toros} 
Let $\Omega\subset\R^n$ be a  bounded set.  Then there is $C>0$
such
that for all $x\in \Omega$,  for all  subunit 
path $\gamma$ with $\gamma(0)=x$ and for all
$p\in\{1,\dots,\min\{q, n\}\}$, we have
\begin{equation}
\label{stoic2}
 \bigl| \Lambda_p(\gamma(t))- 
\Lambda_p(x)\bigr|\le
\abs{\Lambda_p(x)}(e^{C t}
-
1)\quad\text{for all $t\in [0, C^{-1}]$}.
\end{equation}
In particular,  for each $p\in\{1,\dots, q\}$ and  $I\in\I(p,q)$, we have
\begin{equation}
      \label{ippi}
      \abs{Y_I(\gamma(t))-Y_I(x)}
\le C\abs{\Lambda_p(x)}(e^{Ct}-1)\quad\text{for all $t\in[0, C^{-1}]$.}
\end{equation} 
\end{theorem}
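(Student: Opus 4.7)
The plan is to derive an ODE system satisfied along $\gamma$ by the wedge products of smooth approximations, and then apply a Duhamel/Gronwall argument before passing to the limit. Fix open sets $\Omega\subset\subset\Omega_0\subset\subset\Omega_1$ containing a neighborhood of the range of $\gamma$, and let $Y_j^{(\s)}$ be the Friedrichs mollifications. For $\s$ small, the wedges $Y_I^\s = Y_{i_1}^{(\s)}\wedge\cdots\wedge Y_{i_p}^{(\s)}$ are smooth in $x$, with Lipschitz bounds and uniform norms controlled by $\|Y_j\|_{C^{0,1}(\Omega_1)}$ independently of $\s$.

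The first step is the key differentiation along $\gamma$. Since $\gamma$ is Lipschitz with $\dot\gamma(t)=\sum_j u_j(t) Y_j(\gamma(t))$, $|u|\le 1$, and $Y_j(\gamma) = Y_j^{(\s)}(\gamma) + O(\s)$ uniformly on $\Omega$, the function $t\mapsto Y_I^\s(\gamma(t))$ is absolutely continuous and
\[
\tfrac{d}{dt} Y_I^\s(\gamma(t)) = \sum_{j} u_j(t)\, \mathcal{L}_{Y_j^{(\s)}} Y_I^\s(\gamma(t)) + O(\s),
\]
with the $O(\s)$ term uniform in $t$ (absorbing $\|\nabla Y_I^\s\|_\infty$, which is bounded uniformly in $\s$). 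Next, by Leibniz and \eqref{ingrao},
\[
\mathcal{L}_{Y_j^{(\s)}} Y_I^\s = \sum_{k=1}^p Y_{i_1}^{(\s)}\wedge\cdots\wedge [Y_j^{(\s)},Y_{i_k}^{(\s)}]\wedge\cdots\wedge Y_{i_p}^{(\s)}
= \sum_{k=1}^p\sum_{m=1}^q (c_{j,i_k}^m)^{(\s)}\, Y_{i_1}^{(\s)}\wedge\cdots\wedge Y_m^{(\s)}\wedge\cdots\wedge Y_{i_p}^{(\s)} + O(\s).
\]
Each inner wedge is either zero (if $m\in\{i_1,\dots,i_p\}\setminus\{i_k\}$) or $\pm Y_{I'}^\s$ for some $I'\in\I(p,q)$. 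Collecting terms and using the $L^\infty$ bound on $c_{jk}^m$ from Lemma~\ref{lomi} yields
\[
\tfrac{d}{dt} Y_I^\s(\gamma(t)) = \sum_{I'\in\I(p,q)} A_{I,I'}^\s(t)\, Y_{I'}^\s(\gamma(t)) + O(\s), \qquad |A_{I,I'}^\s(t)|\le C,
\]
with $C$ depending only on $\Omega_1$. Reading coefficients on the basis $e_K$ gives the same linear structure for $\Lambda_p^\s(\gamma(t))$, namely $\tfrac{d}{dt}\Lambda_p^\s(\gamma(t)) = B^\s(t)\Lambda_p^\s(\gamma(t)) + O(\s)$ with $\|B^\s(t)\|\le C$.

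The second step is a standard integrating-factor/Gronwall argument. Setting $w^\s(t):=\Lambda_p^\s(\gamma(t))-\Lambda_p^\s(x)$, we obtain
\[
|w^\s(t)| \le C\int_0^t |w^\s(s)|\,ds + Ct\,|\Lambda_p^\s(x)| + O(\s),
\]
and Gronwall gives $|w^\s(t)| \le |\Lambda_p^\s(x)|(e^{Ct}-1) + o(1)$ as $\s\to 0$, uniformly for $t\in[0,C^{-1}]$. Since $Y_I^\s\to Y_I$ uniformly on $\Omega_0$, we have $\Lambda_p^\s\to\Lambda_p$ uniformly, and \eqref{stoic2} follows by passing to the limit. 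Finally \eqref{ippi} is immediate from the pointwise estimate $|Y_I(y)-Y_I(x)|^2=\sum_K(Y_I^K(y)-Y_I^K(x))^2 \le |\Lambda_p(y)-\Lambda_p(x)|^2$.

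The main technical obstacle is ensuring that the coefficients $A_{I,I'}^\s$ are bounded uniformly in $\s$. This rests on two ingredients: the $L^\infty$ bound on the structure functions $c_{jk}^m$ provided by Lemma~\ref{lomi} (hence on their mollifications), and the Friedrichs commutator estimate \eqref{ingrao}, which controls the non-commutativity of mollification with bracket by a term of order $\s$ with a constant depending only on the Lipschitz norms. A secondary care is needed in handling the reordering signs and the possible vanishing of the wedges $Y_{i_1,\dots,m,\dots,i_p}^\s$ when the substituted index $m$ collides with an existing one; this produces no error because such terms simply drop out of the sum.
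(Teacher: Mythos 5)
Your overall strategy---mollify, use Lemma \ref{lomi} and the Friedrichs-type identity \eqref{ingrao} to get a linear differential inequality for $\Lambda_p^\sigma$ along the path, then Gronwall and pass to the limit---is the same as the paper's, and working along the original path $\gamma$ instead of the approximated path $\gamma^\sigma$ used in the paper is a harmless variant, since the integrands $Y_I^\sigma$ are smooth and replacing $Y_j(\gamma)$ by $Y_j^{(\sigma)}(\gamma)$ costs only $O(\sigma)$ against the uniformly bounded $\nabla Y_I^\sigma$. The problem is in your key differentiation step. The derivative of $t\mapsto Y_I^\sigma(\gamma(t))$ is the componentwise directional derivative $\sum_j u_j\,(Y_j^{(\sigma)}\cdot\nabla)Y_I^\sigma$, \emph{not} the Lie derivative $\sum_j u_j\,\mathcal{L}_{Y_j^{(\sigma)}}Y_I^\sigma$; the two differ by
\begin{equation*}
\sum_j u_j\sum_{\alpha=1}^p Y_{i_1}^{\sigma}\wedge\cdots\wedge\bigl((Y_{i_\alpha}^{\sigma}\cdot\nabla)Y_j^{(\sigma)}\bigr)\wedge\cdots\wedge Y_{i_p}^{\sigma},
\end{equation*}
i.e.\ by terms carrying the gradients of the coefficients of the \emph{differentiating} fields. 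These are of size $O(1)$, not $O(\sigma)$, so they cannot be absorbed into your remainder, and the naive estimate of such a wedge only bounds them by norms of $(p-1)$-fold wedges, i.e.\ by $\abs{\Lambda_{p-1}^\sigma}$. That would break the closed inequality $\abs{\tfrac{d}{dt}\Lambda_p^\sigma}\le C\abs{\Lambda_p^\sigma}+C\sigma$, and with it the multiplicative form of \eqref{stoic2}, which is exactly what forces $\Lambda_p$ to remain zero along the path when $\Lambda_p(x)=0$.

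This is precisely the point where the paper invokes Lemma \ref{carica}: the extra contribution can be rewritten, with coefficients bounded by the Lipschitz constants of the mollified fields, as a combination of the \emph{other} components $dx^{K'}(Y_J^\sigma)$ of the same vector $\Lambda_p^\sigma$ (the term $A_3^\sigma$ in the paper's proof), so the system stays linear in $\Lambda_p^\sigma$ with uniformly bounded coefficients and your Gronwall argument then closes. As written, your passage from the bracket expansion to the bounded linear system for $\Lambda_p^\sigma$ omits these terms entirely, so the proof has a genuine gap; it is repaired by inserting the determinant identity of Lemma \ref{carica} (or an equivalent computation) at that step, after which the rest of your argument, including the limit $\sigma\to0$ and the deduction of \eqref{ippi}, is fine.
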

\begin{remark}\label{pku} 
Let $x\in\Omega$,
  $p= p_x$, $I\in\I(p_x, q)$ and $\eta\in(0,1)$ be such that
\[
      \abs{Y_I(x)}>\eta\max_{K\in\I(p_x,q)}\abs{Y_K(x)}.
\]
Then
\begin{equation}\label{added} 
      \abs{Y_I(\gamma(t))-Y_I(x)}
\le C\frac{t}{\eta}\abs{Y_I(x)} \quad\text{for all $t\in[0, C^{-1}]$.}  
\end{equation} 
\end{remark}

An immediate consequence of Theorem \ref{toros} is the following corollary.
\begin{corollary}\label{toro} 
 For all $x_0\in \R^n$, the number  $p_x:=\dim \Span\{Y_{1,x}\dots,
Y_{q,x}\}$
is constant if $x\in \cal{O}^{x_0}$.
\end{corollary}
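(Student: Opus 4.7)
The plan is to deduce constancy of $p_x$ along any subunit path from the two-sided control provided by Theorem~\ref{toros}, and then to iterate this local fact. Fix $x_0\in\R^n$ and an arbitrary $y\in\O^{x_0}$. By the definition of the orbit there is a subunit path $\gamma\colon[0,T]\to\R^n$ with $\gamma(0)=x_0$ and $\gamma(T)=y$; its image lies in some bounded open set $\Omega$. I would invoke Theorem~\ref{toros} on $\Omega$ to obtain a single constant $C>0$ such that \eqref{stoic2} holds with this $C$ for every subunit path that starts in $\Omega$.

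The key observation is that $p_z$ is characterised as the largest integer $p$ with $\Lambda_p(z)\neq 0$. Setting $p:=p_{x_0}$, one has $\Lambda_{p}(x_0)\neq 0$ and $\Lambda_{p+1}(x_0)=0$, the case $p=\min\{n,q\}$ being immediate. Applying \eqref{stoic2} at $x_0$ with rank $p+1$, the right-hand side vanishes identically, forcing $\Lambda_{p+1}(\gamma(t))=0$ on all of $[0,C^{-1}]$, hence $p_{\gamma(t)}\le p$. Applying \eqref{stoic2} at $x_0$ with rank $p$ and the reverse triangle inequality yields
\[
\bigl|\Lambda_p(\gamma(t))\bigr|\ge \bigl|\Lambda_p(x_0)\bigr|\bigl(2-e^{Ct}\bigr),
\]
which is strictly positive on a uniform interval $[0,\tau]$ with $\tau=\tau(C)>0$, giving $p_{\gamma(t)}\ge p$. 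Combining the two bounds produces $p_{\gamma(t)}=p_{x_0}$ for $t\in[0,\tau]$.

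The remaining task is to upgrade this local constancy to the full interval $[0,T]$. Since $\tau$ depends only on $\Omega$, not on the starting point within $\Omega$, I would partition $[0,T]$ into finitely many subintervals $[t_i,t_{i+1}]$ of length at most $\tau$, observe that each restriction $s\mapsto\gamma(t_i+s)$ is itself a subunit path with initial point in $\Omega$, and apply the local step inductively to conclude $p_{\gamma(T)}=p_{x_0}$. The main (and only) subtlety is precisely this chaining argument: it relies on having a single constant $C$ that is uniform over the bounded set $\Omega$, which is exactly the content supplied by Theorem~\ref{toros}. With this in place, the arbitrariness of $y\in\O^{x_0}$ completes the corollary.
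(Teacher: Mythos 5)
Your proposal is correct and rests on exactly the same ingredient as the paper, namely the uniform estimate \eqref{stoic2} of Theorem \ref{toros}, which propagates both the vanishing of $\Lambda_{p_{x_0}+1}$ and the nonvanishing of $\Lambda_{p_{x_0}}$ along a subunit path. The only (immaterial) difference is how you globalize: the paper argues that $\{t:\abs{\Lambda_p(\gamma(t))}=0\}$ is open (by \eqref{stoic2}) and closed (by continuity) in the interval, while you chain finitely many steps of uniform length $\tau(C)$, which amounts to the same thing.
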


To prove Theorem \ref{toros} and Corollary \ref{toro} we need the
following lemma.

\begin{lemma}\label{carica} 
      Let $p\le n$ and let  $U_1, \dots, U_p$  and $X =
\sum_{\a=1}^n f^\a \p_\a$ be smooth vector fields in $\R^n$.  
For any $K= (k_1, \dots, k_p)\in\I(p,n)$, we have
\begin{equation*}
\begin{aligned}
      X\big(dx^K(U_1, \dots, U_p)\big)& = \sum_{1\le \a\le p}
dx^K(U_1, \dots,U_{\a-1},[X, U_\a], U_{\a+1},\dots, U_p)
\\&\quad +\sum_{1\le \g\le n}\sum_{1\le\b\le p}
  \p_\gamma f^{k_\b}dx^{(k_1,\dots, k_{\b-1},\g,k_{\b+1},\dots, k_p)}(U_1,\dots, U_p).
\end{aligned}
\end{equation*}
\end{lemma}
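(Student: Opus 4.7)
The plan is to recognize the identity as the Leibniz/Cartan rule for the action of a vector field on the evaluation of a $p$-form on $p$ vector fields, specialised to the coordinate form $\omega = dx^K$. Since all the data in Lemma~\ref{carica} are smooth, the usual exterior calculus is available and the proof reduces to computing $\mathcal{L}_X dx^K$.

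First I would invoke the general identity
\begin{equation*}
X\bigl(\omega(U_1,\dots,U_p)\bigr) = (\mathcal{L}_X\omega)(U_1,\dots,U_p) + \sum_{\alpha=1}^p \omega\bigl(U_1,\dots,U_{\alpha-1},[X,U_\alpha],U_{\alpha+1},\dots,U_p\bigr),
\end{equation*}
valid for any smooth $p$-form $\omega$ and any smooth vector fields $U_1,\dots,U_p,X$ (this is the immediate consequence of the derivation property of $\mathcal{L}_X$ on tensor contractions, equivalent to Cartan's magic formula). Taking $\omega=dx^K$ produces at once the first sum in the statement.

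Second I would compute $\mathcal{L}_X dx^K$. Using that $\mathcal{L}_X$ is a graded derivation of the exterior algebra and commutes with $d$, and that $\mathcal{L}_X x^{k_\beta} = Xx^{k_\beta} = f^{k_\beta}$, one gets $\mathcal{L}_X dx^{k_\beta} = df^{k_\beta} = \sum_{\gamma} \partial_\gamma f^{k_\beta}\, dx^\gamma$. Expanding
\begin{equation*}
\mathcal{L}_X(dx^{k_1}\wedge\cdots\wedge dx^{k_p}) = \sum_{\beta=1}^p dx^{k_1}\wedge\cdots\wedge \mathcal{L}_X dx^{k_\beta}\wedge\cdots\wedge dx^{k_p}
\end{equation*}
and substituting the previous identity, the resulting $p$-form, evaluated on $(U_1,\dots,U_p)$, equals the second double sum in the claimed formula. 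Adding the two contributions gives the lemma.

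I do not foresee any real obstacle: the identity is a direct unwinding of Cartan's formula at the level of coordinate forms. The only mild bookkeeping point worth flagging is that in the second sum the symbol $dx^{(k_1,\dots,k_{\beta-1},\gamma,k_{\beta+1},\dots,k_p)}$ must be read as a wedge product of $dx$'s with $\gamma$ in position $\beta$ and \emph{not} as an increasing multiindex; this matches perfectly the multilinear alternating determinantal convention $Y_I^K(x) = \det[g_{i_\alpha}^{k_\beta}(x)]$ adopted in the Notation paragraph, so no sign bookkeeping is needed beyond what is already built into the wedge product. Should one prefer a coordinate-only proof, the same formula follows by expanding $dx^K(U_1,\dots,U_p)=\det[u_\alpha^{k_\beta}]$, applying the Leibniz rule for determinants, and decomposing $X(u_\alpha^{k_\beta})=[X,U_\alpha]^{k_\beta}+U_\alpha f^{k_\beta}$, which yields the commutator and derivative sums respectively.
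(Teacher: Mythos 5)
Your proof is correct. Note that the paper itself does not prove Lemma \ref{carica} at all: its ``proof'' is only a citation to \cite{Street} and \cite{MontanariMorbidelli11d}, so you are supplying an argument the paper omits. Your route via the standard identity
\begin{equation*}
X\bigl(\omega(U_1,\dots,U_p)\bigr)=(\mathcal{L}_X\omega)(U_1,\dots,U_p)+\sum_{\a=1}^{p}\omega\bigl(U_1,\dots,[X,U_\a],\dots,U_p\bigr),
\end{equation*}
together with $\mathcal{L}_X dx^{k_\b}=df^{k_\b}=\sum_\g \p_\g f^{k_\b}\,dx^\g$ and the derivation property of $\mathcal{L}_X$ on wedge products, reproduces exactly the two sums in the statement, and your caveat that $dx^{(k_1,\dots,k_{\b-1},\g,k_{\b+1},\dots,k_p)}$ carries $\g$ in slot $\b$ (not reordered to an increasing multiindex) is precisely the convention the paper's determinantal definition of $Y_I^K$ requires. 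The coordinate alternative you sketch at the end --- differentiate $\det[u_\a^{k_\b}]$ entrywise, split $X(u_\a^{k_\b})=[X,U_\a]^{k_\b}+U_\a f^{k_\b}$, and regroup the first contribution by columns and the second by rows via cofactors --- is also sound and is essentially the computation one finds in the cited sources, so it is closer in spirit to the references, while the Lie-derivative argument is the cleaner and more conceptual of the two. One minor quibble: the contraction identity you invoke is a standard consequence of $\mathcal{L}_X$ being a derivation commuting with contractions rather than being literally ``equivalent'' to Cartan's magic formula, but this does not affect the validity of the proof.
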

\begin{proof}[Proof of Lemma \ref{carica}]
      See \cite{Street} or  \cite{MontanariMorbidelli11d}.
\end{proof}

\begin{proof}[Proof of Theorem \ref{toros}]
Fix two bounded  open sets  $\Omega_0\subset\subset\Omega_1\subset\R^n$
such that $\Omega\subset\subset\Omega_0$.
Let  $x\in\Omega$ and take a subunit path  $\gamma$ 
 such that $\gamma(0)= x$. Note that $\gamma $ is
the a.e.~solution of a  problem of the form 
$\dot\gamma = \sum_{j=1}^qu_j(t) Y_j(\gamma)$ where $\gamma(0)=x$ and
$u\in L^\infty((0,1), \R^q)$  satisfies $\abs{u(t)}\le 1$, a.e.  
We can approximate  $\gamma$ with a path
$\gamma^\s$ obtained as the solution of the problem  
\[
\dot\gamma^\s = \sum_{1\le j\le q}u_j(t) Y_j^\s(\gamma^\s)\quad\text{a.e., with}\quad \g^\s(0)=x.
\]
Since the vector fields
are locally Lipschitz, there is $C$ depending on $\Omega$,  $\Omega_0$
and on the Lipschitz constants of the vector fields such that
$\gamma^\s(t)\in \Omega_0 $ for all $t\in[0, C^{-1}]$.
Recall by
\eqref{ingrao} that for all $Z\in\pm\cal{P}$, we may write 
 $[Z^\s,Y_j^\s]= \sum (c_{j}^{i})^{(\s)}Y_i^\s + \s
b_j^\s\cdot\nabla$ for suitable   functions
$(c_{j}^{i})^{(\s)}$ and $b_j^\s$ smooth for all $\s>0$  and   bounded
uniformly
in $\s$. Namely, there are $C$ and $\wt \s$ depending on the choice of
$\Omega_0$ and $\Omega_1$ such that we have 
\begin{equation*}
      \sup_{\Omega_0}( \abs{(c_{j}^{i})^{(\s)}}+\abs{b_j^\s})\le C
\quad\text{for all $\sigma\le \wt \s$.}
\end{equation*}

 Fix $J\in \I(p,q) $ and $K\in \I(p,n)$.
Note that  each function $t\mapsto Y_h^\s(\g^\s(t))$ is Lipschitz continuous. 
At any differentiability point $t$ of $\gamma^\s$ we have $\frac{d}{dt}dx^K(Y_J(\gamma^\s_t))
= \sum_{h=1}^q u_h(t) Z_h^\s(dx^K(Y_J^\s))(\gamma_t^\s)$, where $\abs{u_j(t)}\le 1$.
By Lemma \ref{carica} we get
\begin{equation*}\begin{aligned}
 Z_h^\s (dx^K(Y_J^\s))(\g_t^\s)&
= \sum_{\a=1}^p \sum_{i=1}^q (c_{j_\a}^i)^{(\s)}(\g_t^\s)
dx^K (\dots, Y_{j_{\a-1}}^\s, Y_i^\s , Y_{j_{\a+1}}^\s , \dots,
Y_{j_{p}}^\s)(\g_t^\s)
\\& +\sum_{\a=1}^p \s dx^K
( \dots, Y_{j_{\a-1}}^\s(\g_t^\s), b_{j_\a}^\s(\g_t^\s)\cdot \nabla,
Y_{j_{\a+1}}^\s(\g_t^\s),\dots  )
\\&+\sum_{\g=1}^n \sum_{\b=1}^p \p_\g (g^{k_\b} )^{(\s)}
dx^{( k_1,\dots, k_{\b-1})}\wedge dx^\g \wedge
dx^{(k_{\b+1},\dots,,k_p)}(Y_J^\s)(\g_t^\s)
\\&=: A_1^\s + A_2^\s + A_3^\s.
 \end{aligned}
\end{equation*}
Denote $\Lambda_p^\s(y):= \big(dx^K Y_J^\s(y)\big)_{J\in\I(p,q), K\in \I(p,n)}$.
Since $ (c_{j_\a}^i)^{(\s)}$, $\p_\g g^{k,(\s)}$
and $ b_{j_\a}^\s$ are uniformly bounded as $\s$ tends to $0$,   we get
$ |A_1^\s|+ |A_2^\s|+|A_3^\s|\le C_1\abs{\Lambda_p^\s(\g_t^\s)}  + \s C_2$. This
gives for a.e.~$t$  
\begin{equation}
      \label{africa}
\Bigl|\frac{d}{dt} dx^K(Y_J^\s(\g_t^\s))\Bigr|\le
C_1\abs{\Lambda_p^\s(\gamma_t^\s)}+\s C_2 \quad\text{for all $J\in\I(p,q)$ $K\in\I(p,n)$.}
\end{equation} 
Therefore,
\begin{equation}\label{25} 
\begin{aligned}
 \Big|\frac{d}{dt} \Lambda_p^\s(\g_t^\s)\Big|&=
 \Big|\Big(\frac{d}{dt} \Lambda_p^\s(\g_t^\s)\Big)_{J\in\I(p,q), K\in\I(p,n)}\Big|
\\&\le  C_1 \abs{\Lambda_p^\s(\g_t^\s)} + C_2 \sigma.
\end{aligned}
\end{equation}
Integrating \eqref{25}  and  using the  Gronwall inequality\footnote{for all $a\ge 0$, $b>0$,
$T>0$ and $f$ continuous on $ [0,T]$,
\begin{equation}\label{grammatica}
 0\le f(t)\le a t+ b\int_0^t f(\t) d\t \quad\text{on
$t\in [0, T]$}\quad\Rightarrow
\quad f(t)\le \frac{a}{b} (e^{b t}-1)\quad\text{on  $t\in[0,T]$.}
\end{equation}} we obtain 
\[
\bigl|\Lambda_p^\s(\g_t^\s)  -
\Lambda_p^\s(x)\bigr|\le
  \Big(\abs{ \Lambda_p^\s(x)}+ \frac{C_2}{C_1}\s\Big) \big(e^{C_1
t}-1\big)
\]
for all $t\in [0, C^{-1}]$. Therefore,
as $\s\to 0$,  we get the conclusion
\begin{equation*}
\bigl| \Lambda_p (\g_t )  -
 \Lambda_p (x)\bigr|\le
 \abs{   \Lambda_p (x)}  \big(e^{C_1 t}-1\big)\quad\text{for all $t\in[0,C^{-1}]$.}
\end{equation*}
Estimate   \eqref{ippi} follows trivially.
The proof is concluded.
\end{proof}

\begin{proof}[Proof of Corollary \ref{toro}]
      Let $p\in\{1,\dots,\min\{q,n\}\}$. Let $I\subset\R$ be an interval.
Let $\gamma:I\to \R$ be a
subunit path. 
Let $A_p:=\{t\in
I:\abs{\Lambda_p(\gamma(t))}=0\}$. 
We claim that $A_p$ is open and closed. This will imply that either
$A_p=\varnothing$ or $A_p= I$. Then the proof is concluded.

To show the claim, note that the set is closed because it is the zero
set of the continuous function $I \ni t\mapsto
\abs{\Lambda_p(\gamma(t))}\in\R$.  
It is open as an easy consequence of estimate \eqref{stoic2}.
\end{proof}

\subsection{Manifold structure of orbits}\label{datt} 
In order to show our main theorem, we start with the following
commutativity lemma, which generalizes to the nonconstant rank the results in
\cite{RamSus,Rampazzo}.

Let  $\P=\{Y_1, \dots, Y_q\}$ be our family of vector fields and let  $\O= \O_\P^{x}$ be    a given
 orbit. We denote 
  $d$-balls by $ B_d(x,r)$ and Euclidean balls by $B(x,r)$.
Recall that the distance $d$ equips the orbit with a natural topology $\t_d$ which makes $\O$ connected.

\begin{lemma}
      Assume that the hypotheses of Theorem \ref{manno} hold. Let
$x_0\in\R^n$ and let    $p:= p_{x_0}$. Take also $I\in\I(p_{x_0},q)$ such that $\abs{Y_I(x)}\neq
0$. Then there exist $\e,\delta>0$, 
a neighborhood $U_{x_0}:= B_d(x_0, \delta)\subset B(x_0, \e)\cap \O$ of $x_0$ in
the orbit distance $d$ and a map $\b \in\Lip(B(x_0,\e),\R^{p\times
p})$ such that, letting 
\begin{equation}\label{vuj}
V_j :=\sum_{1\leq k\leq p} \b_j^k Y_{i_k}\quad\text{for all
$j=1,\dots,p$},
\end{equation} 
we have 
\begin{equation}\begin{aligned}
\label{caravan}
 \Span\{ V_{1,x}, \dots, V_{p,x}\} & = \Span\{ Y_{i_1,x}, \dots,
Y_{i_p,x}\} 
\\&
=
\Span\{Y_{1,x},\dots, Y_{p,x},\dots,Y_{q,x}\}
\end{aligned}
\end{equation}
for all $x\in
 U_{x_0}$. Furthermore, 
for all $j,k\in\{1,\dots,p\}$, for all $x\in U_{x_0}$ and for all small
$\abs{t_j},\abs{t_k}$, we have
the commutativity formula
\begin{equation}\label{commu} 
e^{-t_jV_j }e^{-t_k V_k } e^{t_j V_j} e^{t_k V_k} x=x.
\end{equation}  
\end{lemma}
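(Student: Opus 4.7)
The plan is to construct $V_1,\ldots,V_p$ as uniform limits of smooth commuting vector fields $V_j^\s$ built from the mollifications $Y_j^\s$. By Corollary \ref{toro}, $p_y = p$ for every $y \in \O^{x_0}$. Since $Y_I$ is continuous and $\abs{Y_I(x_0)} \neq 0$, fix $\e > 0$ with $\abs{Y_I(x)} \geq c_0 > 0$ on $B(x_0, 2\e)$; then $Y_{i_1},\ldots,Y_{i_p}$ are linearly independent throughout $B(x_0, 2\e)$, and constancy of rank along the orbit forces $Y_j(x) \in \Span\{Y_{i_k}(x): k=1,\ldots,p\}$ for every $x \in B(x_0, 2\e) \cap \O^{x_0}$.

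Fix a linear complement $T$ of $\Span\{Y_{i_k}(x_0)\}$ in $\R^n$. For $\s > 0$ small enough that $\abs{Y_I^\s(x)} \geq c_0/2$ on $B(x_0, \e)$, consider the smooth map
\[
\Phi^\s(s, y) := e^{s_p Y_{i_p}^\s} \circ \cdots \circ e^{s_1 Y_{i_1}^\s}(y), \qquad (s, y) \in B(0, \delta_0) \times ((x_0 + T) \cap B(x_0, \delta_0)),
\]
where $\delta_0$ depends only on $c_0$ and the local Lipschitz norms of the $Y_j$, hence is uniform in $\s$. The inverse function theorem with uniform constants shows that $\Phi^\s$ is a smooth diffeomorphism onto an open neighborhood of $x_0$ containing $B(x_0, \e)$. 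Set $V_j^\s := (\Phi^\s)_* \partial_{s_j}$: since the coordinate fields $\partial_{s_j}$ commute upstairs, so do the flows of the $V_j^\s$ downstairs, and one has $e^{-t_j V_j^\s} e^{-t_k V_k^\s} e^{t_j V_j^\s} e^{t_k V_k^\s}(x) = x$ on $B(x_0, \e)$ for $\abs{t_j},\abs{t_k}$ small.

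The key technical step, and the \emph{main obstacle}, is to write
\[
V_j^\s(x) = \sum_{k=1}^p \beta_j^{k,\s}(x)\, Y_{i_k}^\s(x) + \s R_j^\s(x), \qquad x \in B(x_0, \e),
\]
with $\beta^\s$ and $R^\s$ uniformly Lipschitz in $\s$. The coefficients $\beta_j^{k,\s}$ are given by the Cramer formula \eqref{cromo} in the nondegenerate basis $\{Y_{i_k}^\s\}$, while the term $\s R_j^\s$ measures the defect of $V_j^\s$ from belonging to $\Span\{Y_{i_k}^\s\}$ (this defect vanishes at $x_0$ but elsewhere only up to $O(\s)$, because of the approximate involutivity \eqref{ingrao}). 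The difficulty is that the second derivatives of $Y^\s$ appearing in $d\Phi^\s$ blow up like $1/\s$; the uniform bound must come from the cancellation afforded by the approximate involutivity. Concretely, differentiating the wedge products $Y_J^\s$ along the leaves of $\Phi^\s$ via Lemma \ref{carica} and invoking \eqref{ingrao} produces ODEs with coefficients bounded uniformly in $\s$ modulo an $O(\s)$ forcing; Gronwall together with the Jacobian formulas for composed flows of \cite{NagelSteinWainger, Street, MontanariMorbidelli11d} then yields uniform Lipschitz bounds on $V_j^\s$, and hence on $\beta^\s$ and $R_j^\s$.

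By Arzel\`a-Ascoli, extract a subsequence along which $\beta^\s \to \beta$ uniformly on $B(x_0, \e)$ with $\beta \in \Lip(B(x_0, \e), \R^{p \times p})$, and set $V_j := \sum_k \beta_j^k Y_{i_k}$, which is Lipschitz. Since $\Phi^\s(0, x_0) = x_0$ and $\partial_{s_j}\Phi^\s(0, x_0) = Y_{i_j}^\s(x_0)$, one has $V_j^\s(x_0) = Y_{i_j}^\s(x_0)$, hence $\beta^\s(x_0) = I$ and $\beta(x_0) = I$; by Lipschitz continuity $\beta(x)$ is invertible on some $B(x_0, \e') \subset B(x_0, \e)$. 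Pick $\delta > 0$ small enough that $U_{x_0} := B_d(x_0, \delta) \subset B(x_0, \e') \cap \O^{x_0}$; this is possible because subunit paths have Euclidean speed bounded by $\sup_{B(x_0, \e)}\max_j \abs{Y_j}$. On $U_{x_0}$, invertibility of $\beta(x)$ gives the first equality in \eqref{caravan}, and constancy of rank on the orbit gives the second. Finally, commutativity \eqref{commu} passes to the limit: uniform convergence $V_j^\s \to V_j$ with a uniform Lipschitz bound gives uniform convergence of flows $e^{t V_j^\s} \to e^{t V_j}$ by standard Gronwall ODE stability, so the smooth commutativity identity survives in the limit.
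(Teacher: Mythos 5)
Your construction (pushing forward the coordinate fields $\p_{s_j}$ under the composed flow map $\Phi^\s$ of the mollified fields, so that the $V_j^\s$ commute by fiat) is attractive, but the step you yourself flag as the ``main obstacle'' is exactly where the argument breaks, and the sketch you offer for it cannot work as stated. You need $V_j^\s=\sum_k\b_j^{k,\s}Y_{i_k}^\s+\s R_j^\s$ with bounds uniform in $\s$ \emph{on the whole Euclidean ball} $B(x_0,\e)$, because your map $\Phi^\s$ is built over a full transversal slice, whose points are generically \emph{off} the orbit $\O^{x_0}$. But the finite-type condition \eqref{ingrao} only expresses $[Y_j^\s,Y_k^\s]$ through \emph{all} $q$ fields $Y_1^\s,\dots,Y_q^\s$; to close the resulting ODE system along a leaf of $\Phi^\s$ modulo $O(\s)$ you must re-express the extra fields $Y_s$, $s\ge p+1$, in terms of $Y_{i_1},\dots,Y_{i_p}$ with bounded coefficients, and this is available (via Cramer \eqref{cromo}, Corollary \ref{toro} and Remark \ref{pku}) only at points of the orbit, i.e.\ in a $d$-neighborhood of $x_0$ --- not on a Euclidean neighborhood, where the rank can jump above $p$ and $\Span\{Y_{i_1,x},\dots,Y_{i_p,x}\}$ need not contain the remaining fields (this is the singular-rank phenomenon behind Examples \ref{exam} and \ref{balan}, and it is precisely why the second equality in \eqref{goj} may fail on every Euclidean neighborhood). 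Consequently neither the $O(\s)$ smallness of $R_j^\s$ nor the uniform Lipschitz bound on $\b^\s$ (which must beat the $1/\s$ blow-up of the second derivatives of $Y^\s$ entering $d\Phi^\s$) is justified, and without them the Arzel\`a--Ascoli limit, the formula $V_j=\sum_k\b_j^kY_{i_k}$, and the passage of commutativity to the limit all collapse. Note also that your scheme would prove \eqref{commu} on a Euclidean neighborhood, which is strictly stronger than what is claimed and than what the hypotheses appear to support.

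The paper's proof is organized to avoid exactly this trap: it defines $\b^{k,\s}_j$ purely algebraically, by inverting the $p\times p$ block $(g_k^{\ell,\s})_{k,\ell\le p}$, so that $V_\ell^\s=\p_\ell+\sum_{i\ge p+1}\phi_\ell^{i,\s}\p_i$ and hence $[V_i^\s,V_j^\s]\in\Span\{\p_{p+1},\dots,\p_n\}$ automatically (formula \eqref{ovv}); the flows of the $V_j^\s$ do \emph{not} commute exactly, and instead one shows, via the expansion \eqref{giochi} and the on-orbit Cramer identity \eqref{cromi}, that $[V_i^\s,V_j^\s]$ is $O(\s)$ \emph{along the specific curves} $\g^\s$ occurring in the Rampazzo--Sussmann integral formula \eqref{lucida}, all of which remain in a $d$-neighborhood of $x_0$; letting $\s\to0$ then yields \eqref{commu} on the $d$-ball $U_{x_0}$ only. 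If you want to salvage your approach, you would have to confine every estimate on $V_j^\s$ to leaves emanating from orbit points, at which stage you are essentially forced back to the paper's on-orbit argument.
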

\begin{proof}
Let $x_0\in\R^n$. Let $p=p_{x_0}$ and assume without loss of
generality that $(i_1, \dots, i_p)= (1,\dots,p)$, i.e.~that
$Y_1(x_0),
\dots, Y_p(x_0)$ are independent. Recall notation $Y_j:= \sum_{\a=1}^n g_j^\a
\p_\a$. Up to reordering coordinates, we may
as well assume that the matrix 
$(g_j^k(x_0))_{j,k=1,\dots,p}$ is nonsingular.

Define for $\s\ge 0 $ the functions  $\b_j^{k,\s}\in \Lip
(B(x_0, \e),
\R)$
such that  \begin{equation} 
\sum_{1\le k\le p} \b_i^{k,\s}(x)g_{k}^{\ell,\s}(x)= \d_i^\ell     
\end{equation} 
 for all
$i,\ell=1,\dots,p$. Here we let  $g_{k}^{\ell,\s}:=
(g_{k}^{\ell})^{(\s)}$. The functions  
$\b_i^{k,\s}$ are uniquely defined and, if $\e$ is sufficiently  small,
their Lipschitz constants on $B(x_0,\e)$
 are uniform, as $0\le \s\le \wt\s$ for some $\wt \s>
0$.

Define for all $x\in B(x_0, \e)$, 
$\ell=1,\dots,p$ and $0\le \s\le \wt \s$,
\begin{equation*}
\begin{aligned}
 V_{\ell,x}^\s&:  = \sum_{1\le k\le p} \b_\ell^{k,\s}(x) Y_{k,x}^\s
=: \p_\ell + \sum_{p+1\le i\le n}\phi_\ell^{i,\s}(x) \p_i,
\end{aligned}
\end{equation*}
where for $\ell\le p$ and $i\ge p+1$ we defined  $\phi_\ell^{i,\s} =
\sum_{k=1}^p\b_\ell^{k,\s} g_{k}^{i,\s}$. Note that   $\dim \Span\{Y_j^\s(x_0):1\le j\le q\}\ge
\dim \Span\{Y_j(x_0):1\le j\le q\} $ and inequality can be strict. 
Observe that, at the level $\s=0$ we have, by Corollary \ref{toro} and Remark \ref{pku},
\begin{equation}\label{goj} 
 \Span\{ V_{1,x}, \dots, V_{p,x}\} = \Span\{ Y_{1,x}, \dots,
Y_{p,x}\} =
\Span\{Y_{1,x},\dots, Y_{p,x},\dots,Y_{q,x}\}
\end{equation}
for all $x\in
 U_{x_0}$, where $U_{x_0}:= B_d(x_0, \delta)\subset\cal{O}\cap B(x_0,\e)$ is   a small
 $d$-ball centered at 
$x_0$.   Then \eqref{caravan} is proved. 
Note that the second equality of \eqref{goj} could  be false on any Euclidean
neighborhood of $x_0$. 
We can only claim that it holds if  $x$ belongs to a suitable 
$d$-neighborhood of $x_0$. Note that the topology defined by $d$ can be strictly stronger than the Euclidean one.
 This phenomenon makes inapplicable
the arguments of \cite[Theorem 5.3]{RamSus} to show the commutativity formula \eqref{commu}.

In order to show the commutativity formula
\eqref{commu},  first observe that 
\begin{equation}\label{ovv} 
 [V_j^\s, V_k^\s]_x \in\Span \{\p_{p+1}, \dots, \p_n\}\quad\text{for
all $j,k\in\{1,\dots p\}\quad x\in B(x_0, \e)$.}
\end{equation} 
 Let $x\in U_{x_0}= B_d(x_0, \delta)$. Take numbers $t,r$ with $\abs{t},\abs{r}$ small and 
start from the following formula, see \cite[Section 4]{RamSus}
\begin{equation}\label{lucida} 
\begin{aligned}
& e^{-tV_j^\s} e^{-r V_i^\s} e^{t V_j^\s} e^{r V_i^\s}  x-x
\\&= \int_0^td\t\int_0^r
d\r [V_i^\s,V_j^\s] (e^{-\t V_j^\s} e^{-\r V_i^\s}) 
(e^{(\r-r)V_i^\s} e^{\t
V_j^\s} e^{r V_i^\s}x)
\\&=
\int_0^td\t\int_0^r
d\r\Big\langle  [V_i^\s,V_j^\s] 
(e^{(\r-r)V_i^\s} e^{\t
V_j^\s} e^{r V_i^\s}x),\nabla(e^{-\t V_j^\s} e^{-\r V_i^\s}) 
(e^{(\r -r)V_i^\s} e^{\t
V_j^\s} e^{r V_i^\s}x)\Big\rangle.
\end{aligned}
\end{equation}
Note that the term with Euclidean gradient is uniformly bounded, i.e.
\begin{equation*}
\Bigl|\nabla(e^{-\t V_j^\s} e^{-\r V_i^\s}) 
(e^{(\r-r)V_i^\s} e^{\t
V_j^\s} e^{r V_i^\s}x) \Bigr|\le C \end{equation*}
as soon as $x$ is close to $x_0$ and 
the positive numbers $\abs{\r},\abs{\t}$  and $\sigma$  are sufficiently
small.

To prove   formula \eqref{commu}, it suffices to show that, if 
$d(x, x_0) $,    $\abs{t}$ and $\abs{r} $
are sufficiently small, then 
\begin{equation}\label{lobo} 
 [V_i^\s,V_j^\s] 
(e^{(\r-r)V_i^\s} e^{\t
V_j^\s} e^{r V_i^\s}x)\to 0\quad\text{ as $\s\to 0$,}
\end{equation} 
uniformly in the variables  $\r,\t$.
Write 
$
 \gamma^\s:= e^{(\r-r)V_i^\s} e^{\t
V_j^\s} e^{r V_i^\s}x$ and 
$\gamma
:= e^{(\r-r)V_i} e^{\t
V_j} e^{r V_i}x$.
Note that to show  \eqref{lobo}, we shall use the integrability
condition \eqref{ingrao}.

Write first, 
 by \eqref{ingrao}, at any point of the Euclidean
ball  $B(x_0, \e)$ 
\begin{equation}\label{giochi} 
\begin{aligned}
 & [V_i^\s, V_j^\s] =\sum_{h,k=1}^p [\b_i^{k,\s}Y_k^\s,
\b_j^{h,\s}Y_h^\s]
\\&=\sum_{h,k\le p}
(\b_i^{k,\s}Y_k^\s \b_j^{h,\s} -  \b_j^{k,\s}Y_k^\s \b_i^{h,\s})Y_h^\s
 +\sum_{ h,k\le p} \b_i^{k,\s}
\b_j^{h,\s} \Big\{\sum_{1\le s\le q}(c_{kh}^s)^{(\s)} Y_s^\s + \s
b_{kh}^\s\cdot\nabla\Big\} 
\\&= 
\sum_{h,k,\nu\le p}
(\b_i^{k,\s}Y_k^\s \b_j^{h,\s} -  \b_j^{k,\s}Y_k^\s
\b_i^{h,\s})g_h^{\nu,\s}V_\nu^\s
 +
\sum_{h,k,s,\nu=1}^p \b_i^{k,\s}
\b_j^{h,\s}  c_{hk}^{s,(\s)} g_s^{\nu,\s}V_\nu^\s 
\\&\qquad  + \sum_{h,k\le p} \sum_{s\ge p+1}
\b_i^{k,\s}
\b_j^{h,\s} c_{hk}^{s,(\s)} Y_s^\s 
 +
\s\sum_{h,k\le p}  
\b_i^{k,\s}
\b_j^{h,\s}
b_{kh}^\s\cdot\nabla
\\&  
 =: \sum_{\nu\le
p}A_{ij}^{\n,\s}V_\n^{\s} +\sum_{s\ge
p+1}B_{ij}^{s,\s}Y_s^\s+\s C_{ij}^\s\cdot\nabla,
\end{aligned}
\end{equation}
We must write this equality  at the point $\g^\s$ 
and then let $\s\to
0$. Note that the terms
$Y_k^\s \b_i^{h,\s}$ and  $c_{h,k}^{s,(\s)}$ are mollifiers of  
$L^\infty_\loc $ functions. Then we can not establish at this stage
their behavior as  $\s\to 0$.
However by \eqref{ovv}, we know that the projection of the left hand-side along 
$\p_\ell$ vanishes  for all  
$\ell=1,\dots,p$.

We claim now that  \eqref{giochi} can be
 in fact written at the point $\g^\s$ in the form 
\begin{equation}\label{star} 
 [V_i^\s,V_j^\s]_{\g^\s}= \sum_{k\le p}\s
M_{ij}^{k,\s}V_{k,\g}+\s N_{ij}^\s\cdot\nabla,
\end{equation}
where both $M_{ij}^{k,\s}$ ans $N_{ij}^\s$ are  bounded uniformly, as $\s\to
0$.\footnote{In \eqref{star} we 
identify tangent spaces  to $\R^n$ at different points.}
This shows 
\eqref{lobo}. Then \eqref{commu} follows and the proof of the lemma is concluded.

To prove the  claim \eqref{star}, observe first that  
since the vector fields $Y_j= g_j\cdot\nabla$ 
are locally Lipschitz, we know that $
\abs{\g^\s-\g}\le C\s $. Therefore, we have 
\begin{equation}\begin{aligned}
 \label{aa} \abs{g_s^\s(\g^\s)-g_s(\gamma)} & \le C\s\quad\text{for $1\le s\le q$,}
\\  \abs{\b_i^{k,\s}(\g^\s)-\b_i^k(\g)} & \le C\s\quad\text{for
$i,k\in\{1,\dots,p\}$,}
\end{aligned}\end{equation} 
where we used the fact that the functions  $\b_i^k$ are  Lipschitz in some
neighborhood of $x_0$. Moreover, since $d(\gamma,x_0)\le \delta+C(\abs{r}+\abs{t})$ is small, 
  in view of \eqref{goj}
we can write for  $\s=0$ by the Cramer's rule \eqref{cromo}
\begin{equation}\label{cromi} 
 Y_{s,\g} = \sum_{\nu=1}^p\frac{\langle Y_{I,\g}, \iota^\nu(Y_{s,\g})
Y_{I, \g} \rangle}{\abs{Y_{I,\g}}^2} Y_{\nu,\gamma}\quad\text{for all $s\in\{1,\dots,q\}$}
\end{equation} 
 (formula \eqref{cromi} is nontrivial  only if $s\ge p+1$). The ratio is bounded  by Remark~\ref{pku}. 
Note also that calculating \eqref{giochi} at the point $\gamma^\s$, we get
\begin{equation}\label{giam} 
 [V_i^\s, V_j^\s]_{\gamma^\s}= \sum_{k\le
p}A_{ij}^{k,\s}(\g^\s)V_{k,\g^\s}^{\s} +\sum_{\ell\ge
p+1}B_{ij}^{\ell,\s}(\g^\s)Y_{\ell,\g^\s}^\s+\s
C_{ij}^\s(\g^\s)\cdot\nabla,
\end{equation}
where
$\abs{A_{ij}^{k,\s}(\g^\s)},\abs{ B_{ij}^{\ell,\s}(\g^\s)},\abs{C_{ij}^\s (\g^\s)}\le C$ for
some $C$  depending on $I$, $x_0$,  $x$, $\abs{t}$ and $\abs{r}$, but uniformly bounded as
$\sigma\to 0$.

Next write for $k\le p$,
$
 V_{k,\g^\s}^\s = V_{k,\gamma}+\s D_k^\s\cdot\nabla
$, where $D_k^\s$ are uniformly bounded by \eqref{aa}. Moreover,  
for $\ell\ge p+1$, using \eqref{aa} and \eqref{cromi} write
\begin{align*}
 Y_{\ell,\g^\s}^\s& =: Y_{\ell,\g} +\s E_\ell^\s \cdot\nabla
=\sum_{\nu\le p}\frac{\langle Y_{I,\g},
\iota^\nu(Y_{\ell,\g})Y_{I,\g}\rangle}{\abs{Y_{I,\g}}^2}
Y_{\nu,\g} + \s E_\ell^\s\cdot\nabla 
\\&
=\sum_{\nu,k\le p}\frac{\langle Y_{I,\g},
\iota^\nu(Y_{\ell,\g})Y_{I,\g}\rangle}{\abs{Y_{I,\g}}^2}
g_{\nu}^k(\g) V_{k,\g} + \s E_\ell^\s\cdot\nabla .
\end{align*}
Here $E_\ell^\s$ is bounded by \eqref{aa}.
Inserting into \eqref{giam} this gives 
\begin{align*}
[V_i^\s, V_j^\s]_{\g^\s} &=\sum_{k\le p}A_{ij}^{k,\s} (\g^\s)(V_{k,\g}+ \s D_k^\s\cdot\nabla)
\\& \quad +      
\sum_{\ell\ge p+1}B_{ij}^{\ell,\s} (\g^\s)\Bigl(\sum_{\nu,k\le p}
\frac{\langle Y_{I,\g}, \iota^\nu(Y_{\ell,\g})Y_{I,\g}\rangle}{\abs{Y_{I,\g}}^2} g_\nu^k(\g) V_{k,\g}
+\s E_\ell^\s\cdot\nabla\Bigr)
\\&\quad + \sigma C_{ij}^\s(\g^\s)\cdot\nabla.
\end{align*}
Projecting along $\p_k$ with $k\le p$ we get, in view of \eqref{ovv}
\begin{equation*}
 A_{ij}^{k,\s}(\g^\s)+\sum_{\ell\ge p+1}\sum_{\nu\le p}B_{ij}^{\ell,\s}(\g^\s)
\frac{\langle Y_{I,\g},
\iota^\nu(Y_{\ell,\g})Y_{I,\g}\rangle}{\abs{Y_{I,\g}}^2}
g_{\nu}^k(\g)=O(\s)=:\sigma M_{ij}^{k,\sigma}\quad\text{for all $k\le p$}.
\end{equation*}
The functions   $M_{ij}^{k,\s}$ are bounded. 
 Therefore  the claim \eqref{star} is proved and the proof of the lemma is concluded.
\end{proof}

Now we are ready to show that orbits are submanifolds. The argument is known, see \cite{Hermann77} 
for the smooth case, 
\cite[Section 6.0.2]{Rampazzo} for the nonsmooth constant-rank case.

\begin{proof}[Proof of Theorem \ref{manno}]
   Let $x_0\in \R^n$. Let $p:=p_{x_0}$, take $I\in\I(p, q)$ 
such that $\abs{Y_I(x_0)}\neq 0$ and   define   for some small  $\delta>0$
 and $u\in
B(0,\delta) \subset \R^p$
the exponential map
\begin{equation}\label{gimpcall} 
\Phi (u) : =\exp\Big(\sum_{j=1}^p u_j V_j \Big) x_0,  
 \end{equation}  where the vector fields
$V_j$ are defined in \eqref{vuj}.
Observe first that $\Phi(u)\in\O$, for 
 all $u\in B(0,\delta)$, where $\delta$ is sufficiently small.   Indeed,  $\Phi(u)$ is solution of the ODE
\begin{equation*}
      \dot\gamma= \sum_{j=1}^p  
u_j V_j(\gamma(t))= \sum_{j,k=1}^p u_j \beta_j^k(\gamma(t))Y_{k}(\gamma(t))\quad
\text{with $\gamma(0)= x_0$.}
\end{equation*}
Therefore, since the coeffients  $\beta_j^k$ are bounded in a neighborhood of $x$, $\gamma$ is subunit (possibly up to a linear  reparametrization).
Note the obvious  inclusion
\begin{equation}
      \label{trovo}
\Phi(B(0, \d))\subset B_d(x_0, C\d )\quad\text{for all small $\delta>0$.} 
\end{equation}

Next, by the commutativity property established in \eqref{commu} 
we may claim that for all $k\in\{1,\dots,p\}$
\begin{equation*}
 \frac{\p}{\p u_k} \Phi (u) = \frac{\p}{\p u_k}e^{u_k
V_k}\exp\Big(\sum_{j\neq k}
u_j V_j \Big) x_0 =
V_{k,\Phi(u)}.
\end{equation*}
Therefore $\Phi\in C^{1,1}(B(0,\delta), \R^n)$, if
the positive
number $\delta$ is sufficiently small.  Moreover, possibly shrinking 
$\delta$,
the set $\Phi(B (0,\delta))\subset\cal{O}$ is a $p$-dimensional
$C^{1,1}$ submanifold
embedded in $\R^n$ with $T_x \Phi (B(0,\delta)) = P_x$ for all $x\in
\Phi(B(0,\delta))$.

Let $\Sigma:=\Phi(B(0,\delta))$ with $\delta >0$ sufficiently small.
 We claim that 
for all $y\in \Sigma$ there is $\sigma>0$ such that \begin{equation}\label{sottobosco} 
B_d(y, \sigma)\subset \Phi(B(0,\delta))                 
\end{equation} 

To prove the claim, let $z\in B_d(y,\sigma)$. This means that $z=\gamma(1)$, where $\gamma\in \Lip((0,1),
 \R^n)$ satisfies a.e.~$\dot\gamma= \sum_j c_jY_j(\gamma)$ with $\abs{c(t)}\le \sigma$ and $\gamma(0)=y$.
We may assume that there is a small Euclidean neighborhood $U$ of $y$, a neighborhood $V$ of the origin in $\R^n$ and a $C^{1,1}$ diffeomorphism  $F:U\to V$ such that $F(y)=0$ and $F(\Sigma\cap U)= V\cap\{(x', x'')\in\R^p\times\R^{n-p}: x''=0\}.$ Choose $\sigma$ small enough to ensure that $\gamma(1)\in U$ for all $c\in L^\infty(0, 1)$ with $\abs{c(t)}\le \sigma$ a.e.
Since $F\in C^{1,1}$, the Lipschitz path $\eta(t):= F(\gamma(t))\in V$ satisfies for a.e.~$t\in[0,1]$
\begin{equation}\label{trallo} 
      \dot\eta(t)
 = dF(\gamma(t))\dot\gamma(t) = \sum_j c_j(t)Y_jF(\gamma(t))=:\sum_j c_j(t)(F_*Y_j)(\eta(t)),
\end{equation}
with $\eta(0)= 0$. 
This
 Cauchy problem has a
 unique solution, because $F_*Y_j$ is Lipschitz.
 Moreover since $Y_j(z)\in T_z\Sigma$ for all $z\in\Sigma$, it must be 
$F_*Y_j= \sum_{\a=1}^n h_j^\a(\xi)\p_{\xi_\a}$, where $h_j^\a(\xi', 0)=0$
for all $(\xi',0) \in V\cap (\R^p\times \{0\})$, $\a\ge p+1$ and $1\le j\le p$.
 Since the coefficients $h_j^\a$
are Lipschitz continuous in $V$, we conclude that it must be $\eta''(t)=0$ for $t\in[0,1]$. 
This ends the proof of inclusion \eqref{sottobosco}.

Next observe that we can repeat the construction of the map $\Phi$ in \eqref{gimpcall}  at any point $x_0\in\O$ and for any
possible choice of $I\in\I(p_{x_0}, q)$ such that $\abs{Y_I(x_0)}\neq
0$.

As a first consequence  of inclusion \eqref{sottobosco},  we show that the family of  sets of the  form $\Phi(B_\delta)$  constructed in this way can be used as a basis for a topology  $\t_\O$ on $\O$.  To show this,
let $x, \wt x\in\O$. Let
\[
      \Phi_x(u)= \exp\Big(\sum_j u_j V_j\Big)x
\quad\text{and}\quad
 \wt\Phi_{\wt x}(u)= \exp\Big(\sum_j u_j \wt V_j\Big)\wt x \] be the maps constructed as above.
  Let $\delta$  and $\wt\delta$ be sufficiently small to ensure that 
 $\Sigma:= \Phi_x(B_\d)$ and 
$ \wt \Sigma:= \wt\Phi_{\wt x}(B_{\wt\d})$
 are manifolds. Assume also that $\Sigma\cap\wt \Sigma\neq\varnothing$. Let now 
 $x^*\in\Sigma\cap\wt\Sigma$  and choose a map 
$\Phi^*_{x^*}(u)= \exp(\sum_ju_j V^*_j )(x^*).
$
We need to prove that, for sufficiently small $\delta^*>0$,  we have
  $\Phi^*_{x^*}(u)\in\Sigma\cap\wt\Sigma$
 for each $u\in B(0,\delta^*)$. 
But we  proved before that $\Phi_{x^*} (B(0, \delta^*))\subset B_d(x^*, C\delta^*)$, if $\delta^*$
is sufficiently smooth. Therefore the claim follows from the already proved inclusion \eqref{sottobosco}.

A further consequence of inclusions \eqref{trovo} and  \eqref{sottobosco}  is that the topologies $\t_\O$ and $\t_d$ are equivalent. They are both stronger that the Euclidean topology  $\t_\Eucl$  restricted to $\O$. Moreover, on a small piece of leaf   of the form $\Sigma=\Phi(B(0,\delta))$,  $\t_\Eucl,\t_\O$ and $\t_d$ induce
all the same topology.

Finally, the family of maps in \eqref{gimpcall} as $x_0$ varies in a fixed  orbit $\O$,  $I\in\I(p_{x_0}, q)$
is such that $\abs{Y_I(x_0)}\neq 0$ and $\delta$ is sufficiently small,  can be used   to give a structure of differentiable manifold to $(\O, \t_d)$. 
\end{proof}

\footnotesize

\def\cprime{$'$}
\providecommand{\bysame}{\leavevmode\hbox to3em{\hrulefill}\thinspace}
\providecommand{\MR}{\relax\ifhmode\unskip\space\fi MR }
\providecommand{\MRhref}[2]{%
  \href{http://www.ams.org/mathscinet-getitem?mr=#1}{#2}
}
\providecommand{\href}[2]{#2}

\normalsize
\bigskip \noindent\sc \small  Annamaria Montanari, Daniele Morbidelli
\\ Dipartimento di Matematica,
Universit\`{a} di Bologna  (Italy)
\\Email: \tt   annamaria.montanari@unibo.it,
daniele.morbidelli@unibo.it

\end{document}